\DeclareRobustCommand{\SkipTocEntry}[5]{}
\definecolor{blue}{rgb}{.255,.41,.884} 
\definecolor{red}{rgb}{1, 0, 0} 
\definecolor{green}{rgb}{.196,.804,.196} 
\definecolor{yellow}{rgb}{1,.648,0} 
\definecolor{pink}{rgb}{1,0.5,0.5}
\newtheorem{theorem}{Theorem}[section]
\newtheorem{lemma}[theorem]{Lemma}
\newtheorem{proposition}[theorem]{Proposition}
\newtheorem{corollary}[theorem]{Corollary}
\theoremstyle{definition}
\newtheorem{definition}[theorem]{Definition}
\theoremstyle{remark}
\newtheorem{remark}[theorem]{Remark}
\newcommand{\be}{\begin{equation}}
\newcommand{\ee}{\end{equation}}
\newcommand{\II}{{\rm  I\hspace{-.2mm}I}}
\newcommand{\IIo}{\hspace{0.4mm}\mathring{\rm{ I\hspace{-.2mm} I}}{\hspace{.0mm}}}
\newcommand{\IIIo}{{\mathring{{\bf\rm I\hspace{-.2mm} I \hspace{-.2mm} I}}{\hspace{.2mm}}}{}}
\newcommand{\IVo}{{\mathring{{\bf\rm I\hspace{-.2mm} V}}{\hspace{.2mm}}}{}}
\newcommand{\Vo}{{\mathring{{\bf\rm V}}}{}}
\newcommand{\otop}{\mathring{\top}}
\newcommand{\ba}{\begin{array}}
\newcommand{\ea}{\end{array}}
\newcommand{\beq}{\begin{eqnarray}}
\newcommand{\eeq}{\end{eqnarray}}
\newtheorem{lm}{lemma}
\newtheorem{thee}{theorem}
\newtheorem{proo}{proposition}
\newtheorem{co}{corollary}
\newtheorem{rem}{remark}
\newtheorem{deff}{definition}
\newcommand{\bd}{\begin{deff}}
\newcommand{\ed}{\end{deff}}
\newcommand{\bl}{\begin{lm}}
\newcommand{\el}{\end{lm}}
\newcommand{\bp}{\begin{proo}}
\newcommand{\ep}{\end{proo}}
\newcommand{\bt}{\begin{thee}}
\newcommand{\et}{\end{thee}}
\newcommand{\bc}{\begin{co}}
\newcommand{\ec}{\end{co}}
\newcommand{\brm}{\begin{rem}}
\newcommand{\erm}{\end{rem}}
\newcommand{\bS}{\mathbb{S}}
\newcommand{\newc}{\newcommand}
\let\ccdot.
\newc{\aR}{\mbox{\boldmath{$ R$}}}
\newc{\aS}{\mbox{\boldmath{$ S$}}}
\newc{\aT}{\mbox{\boldmath{$ T$}}}
\newc{\aW}{\mbox{\boldmath{$ W$}}}
\newc{\aD}{\mbox{\boldmath{$ D$}}\hspace{-.2mm}}
\renewcommand{\colon}{\scalebox{1.2}{:}}
\newc{\aK}{\mbox{\boldmath{$ K$}}}
\newc{\aL}{\mbox{\boldmath{$ L$}}}
\newcommand{\bT}{{\Bbb T}}
\newcommand{\Sc}{{\it Sc}}
\newcommand{\nn}[1]{(\ref{#1})}
\newc{\obstrn}[2]{B^{#1}_{#2}}
\newcommand{\rpl}                         
{\mbox{$
\begin{picture}(12.7,8)(-.5,-1)
\put(0,0.2){$+$}
\put(4.2,2.8){\oval(8,8)[r]}
\end{picture}$}}
\newcommand{\lpl}                         
{\mbox{$
\begin{picture}(12.7,8)(-.5,-1)
\put(2,0.2){$+$}
\put(6.2,2.8){\oval(8,8)[l]}
\end{picture}$}}
\newc{\tensor}[1]{#1}
\newc{\Mvariable}[1]{\mbox{#1}}
\newc{\down}[1]{{}_{#1}}
\newc{\up}[1]{{}^{#1}}
\newc{\JulyStrut}{\rule{0mm}{6mm}}
\newc{\midtenPan}{\mbox{\sf S}}
\newc{\midten}{\mbox{\sf T}}
\newc{\midtenEi}{\mbox{\sf U}}
\newc{\ATen}{\mbox{\sf E}}
\newc{\BTen}{\mbox{\sf F}}
\newc{\CTen}{\mbox{\sf G}}
\def\sideremark#1{\ifvmode\leavevmode\fi\vadjust{\vbox to0pt{\vss
 \hbox to 0pt{\hskip\hsize\hskip1em
 \vbox{\hsize2cm\tiny\raggedright\pretolerance10000
  \noindent #1\hfill}\hss}\vbox to8pt{\vfil}\vss}}}
\numberwithin{equation}{section}
\newcommand{\hh}{{\hspace{.3mm}}}
\renewcommand\colon{\scalebox{1.3}{$:$}}
\newcommand{\cc}{\boldsymbol{c}}
\newcommand{\pdot}{{\boldsymbol{\cdot}}}
\newcommand{\sss}{\scriptscriptstyle}
\renewcommand\geq{\geqslant}
\renewcommand\leq{\leqslant}
\newcommand\reallywidehat[1]{%
\savestack{\tmpbox}{\stretchto{%
  \scaleto{%
    \scalerel*[\widthof{\ensuremath{#1}}]{\kern-.6pt\bigwedge\kern-.6pt}%
    {\rule[-\textheight/2]{1ex}{\textheight}}
  }{\textheight}%
}{0.5ex}}%
\stackon[1pt]{#1}{\tmpbox}%
}
\begin{document}


\renewcommand{\today}{}
\title{
{Toward a Classification of Conformal Hypersurface Invariants
}}
%
%
%

\author{ Samuel Blitz${}^\flat$}

\address{${}^\flat$
 Department of Mathematics and Statistics \\
 Masaryk University\\
 Building 08, Kotl\'a\v{r}sk\'a 2 \\
 Brno, CZ 61137} 
   \email{blitz@math.muni.cz}
 
\vspace{10pt}

\renewcommand{\arraystretch}{1}

\begin{abstract}


Hypersurfaces embedded in conformal manifolds appear frequently as boundary data in boundary-value problems in cosmology and string theory. Viewed as the non-null conformal infinity of a spacetime, we consider hypersurfaces embedded in a Riemannian (or Lorentzian) conformal manifold. We construct a finite and minimal family of hypersurface tensors---the curvatures intrinsic to the hypersurface and the so-called ``conformal fundamental forms''---that can be used to construct natural conformal invariants of the hypersurface embedding up to a fixed order in hypersurface-orthogonal derivatives of the bulk metric. We thus show that these conformal fundamental forms capture the extrinsic embedding data of a conformal infinity in a spacetime.


\vspace{2cm}

\end{abstract}


\maketitle

\pagestyle{myheadings} \markboth{S. Blitz}{Conformal Hypersurface Invariants}



\newcommand{\balpha}{{\bm \alpha}}
\newcommand{\balphas}{{\scalebox{.76}{${\bm \alpha}$}}}
\newcommand{\bnu}{{\bm \nu}}
\newcommand{\blambda}{{\bm \lambda}}
\newcommand{\bnus}{{\scalebox{.76}{${\bm \nu}$}}}
\newcommand{\bnuss}{\hh\hh\!{\scalebox{.56}{${\bm \nu}$}}}

\newcommand{\bmu}{{\bm \mu}}
\newcommand{\bmus}{{\scalebox{.76}{${\bm \mu}$}}}
\newcommand{\bmuss}{\hh\hh\!{\scalebox{.56}{${\bm \mu}$}}}

\newcommand{\btau}{{\bm \tau}}
\newcommand{\btaus}{{\scalebox{.76}{${\bm \tau}$}}}
\newcommand{\btauss}{\hh\hh\!{\scalebox{.56}{${\bm \tau}$}}}

\newcommand{\bsigma}{{\bm \sigma}}
\newcommand{\bsigmas}{{{\scalebox{.8}{${\bm \sigma}$}}}}
\newcommand{\bbeta}{{\bm \beta}}
\newcommand{\bbetas}{{\scalebox{.65}{${\bm \beta}$}}}

\renewcommand{\bS}{{\bm {\mathcal S}}}
\newcommand{\bB}{{\bm {\mathcal B}}}
\renewcommand{\bT}{{\bm {\mathcal T}}}
\newcommand{\bM}{{\bm {\mathcal M}}}

\newcommand{\go}{{\mathring{g}}}
\newcommand{\nuo}{{\mathring{\nu}}}
\newcommand{\alphao}{{\mathring{\alpha}}}

\newcommand{\Ell}{\mathscr{L}}
\newcommand{\density}[1]{[g\, ;\, #1]}

\renewcommand{\Dot}{{\scalebox{2}{$\cdot$}}}

\newcommand{\PanE}{P_{4}^{\sss\Sigma\hookrightarrow M}}
\newcommand\eqSig{ \mathrel{\overset{\makebox[0pt]{\mbox{\normalfont\tiny\sffamily~$\Sigma$}}}{=}} }
\renewcommand\eqSig{\mathrel{\stackrel{\Sigma\hh}{=}} }
\newcommand\eqtau{\mathrel{\overset{\makebox[0pt]{\mbox{\normalfont\tiny\sffamily~$\tau$}}}{=}}}
\newcommand{\hd }{\hat{D}}
\newcommand{\hdb}{\hat{\bar{D}}}
\newcommand{\Two}{{{{\bf\rm I\hspace{-.2mm} I}}{\hspace{.2mm}}}{}}
\newcommand{\TwoN}{{\mathring{{\bf\rm I\hspace{-.2mm} I}}{\hspace{.2mm}}}{}}
\newcommand{\Fn}{\mathring{\mathcal{F}}}
\newcommand{\csdot}{\hspace{-0.75mm} \cdot \hspace{-0.75mm}}
\newcommand{\IdD}{(I \csdot \hd)}
\newcommand{\Kd}{\dot{K}}
\newcommand{\Kdd}{\ddot{K}}
\newcommand{\Kddd}{\dddot{K}}

 \newcommand{\bdot }{\mathop{\lower0.33ex\hbox{\LARGE$\cdot$}}}

\definecolor{ao}{rgb}{0.0,0.0,1.0}
\definecolor{forest}{rgb}{0.0,0.3,0.0}
\definecolor{red}{rgb}{0.8, 0.0, 0.0}

\newcommand{\APE}[1]{{\rm APE}_{#1}}
\newcommand{\PE}{{\rm PE}}
\newcommand{\FF}[1]{\mathring{\underline{\overline{\rm{#1}}}}}
\newcommand{\ltots}[1]{{\rm ltots}_{#1}}
\newcommand{\tsim}[1]{\stackrel{#1}{\sim}}

\section{Introduction}
In Lorentzian spacetimes with non-zero cosmological constant, the conformal infinity is either spacelike ($\Lambda > 0$) or timelike ($\Lambda < 0$). The structure of conformal infinities of such spacetimes have become a topic of wide interest in both the mathematical and physics communities, providing tools for the AdS/CFT correspondence~\cite{Maldacena,GrWi,GalGraLin}, the conformal cyclic cosmology paradigm~\cite{tod,nurowski,GoKop}, the study of generalized Willmore invariants~\cite{hypersurface_old,Olanipekun,BGW2,Blitz1}, among many others. Thus, rather than studying the physical spacetime $(\hat{M}, \hat{g})$ itself (which does not include its conformal infinity), it is instructive to study its conformal extension to a so-called ``unphysical spacetime'' $(M,g^o)$ containing its conformal boundary. Here, $M$ is a compact manifold with boundary $\partial M$, the interior $M_+$ is diffeomorphic to $\hat{M}$, and $g^o$ is singular along $\Sigma := \partial M$.

Locally, we can always express the boundary of a compact manifold as the zero locus of a \textit{defining function}, \textit{i.e.} a function $s$ such that for all $p \in \Sigma$, we have that $s(p) = 0 \neq ds(p)$. So, according to the Gau\ss\ lemma, we can express this singular metric in terms of normal coordinates, one of which is our defining function. Normalizing dimensions, for $\Lambda > 0$ we can write this singular metric as
$$
g^o = \frac{- d s^2 + h_{s}}{s^2}\,,
$$
where the coordinate vector field $\partial_s$ is inward pointing and timelike and $h_{s}$ is a 1-parameter family of Riemannian metrics on $\Sigma$. Similarly, for $\Lambda < 0$, we have that
$$
g^o = \frac{ds^2 + h_s}{s^2}\,,
$$
where $\partial_s$ is inward pointing and spacelike and $h_s$ a 1-parameter family of Lorentzian metrics on $\Sigma$.

Observe that the choice of defining function was arbitrary: indeed, if $s$ is a defining function for $\Sigma$ and $0 < \Omega \in C^\infty M$, clearly $\Omega s$ is also a defining function for $\Sigma$. Thus, associated to an unphysical Lorentzian spacetime $(M,g^o)$ with non-zero cosmological constant, there is associated a \textit{conformal hypersurface embedding} $\Sigma \hookrightarrow (M,\cc)$, where $\cc = [\Omega^2 s^2 g^o]$ is an equivalence class of metrics determined by positive smooth functions $\Omega$ on $M$. Put another way, for any $g,\tilde{g} \in \cc$, there exists $\Omega \in C^\infty_+ M$ such that $\tilde{g} = \Omega^2 g$; a manifold equipped with such a conformal class of metrics $(M,\cc)$ is called a \textit{conformal manifold}. By virtue of this association, studying the conformal infinities of Lorentzian spacetimes can be viewed equivalently as studying conformal hypersurface embeddings. Furthermore, Anderson~\cite{Anderson2} proved that (at least locally) there is a bijective correspondence between conformal hypersurface embeddings with Lorentzian signature and Riemannian signature via Wick rotation. To that end, we consider hypersurfaces embedded in conformal manifolds with all-plus signature for the remainder of this article, noting that only straightforward sign changes occur when Wick-rotating from these manifolds to those with Lorentzian signatures.

We now recall useful notions of standard Riemannian hypersurface geometry. Given a smooth compact Riemannian manifold $(M,g)$ with boundary $\Sigma := \partial M$, one can locally characterize the embedding $\Sigma \hookrightarrow (M,g)$ in terms of classically-known \textit{hypersurface invariants} of $\Sigma$, where $\Sigma$ is viewed as a hypersurface in $M$. These include the unit conormal $\hat{n}$ to $\Sigma$, the second fundamental form $\II$, and the mean curvature $H$; beyond these, various projections to $\Sigma$ of curvatures (and their derivatives) of $(M,g)$ can further elaborate on the embedding in a diffeomorphism-invariant way. In the purely Riemannian (rather than conformal setting) these hypersurface invariants appear prominently as the prescribed data in initial value problems such as the Einstein field equations as expressed in the ADM formalism~\cite{ADM}. It should not be surprising that a complete understanding of these extrinsic curvatures is quite useful.

Given any defining function $s$ for the embedding $\Sigma \hookrightarrow (M,g)$, one can describe hypersurface invariants in terms of derivatives of $s$, the metric $g$, and their derivatives: for example, the unit conormal is given by $\hat{n} := \frac{ds}{|ds|_g}\big|_{\Sigma}$. Consequently, a useful organizing principle for hypersurface invariants is the notion of $g$-transverse order (defined in more detail in Section~\ref{riem-hyp-sec}). Loosely, viewing a defining function as a (local) coordinate that runs transverse to $\Sigma$, the $g$-transverse order of a hypersurface invariant is the integer $k$ where $\partial_s^k g$ is the leading transverse derivative term of $g$ in a coordinate representation of the invariant. In this regard, $\hat{n}$ has $g$-transverse order $0$ and, for a sufficiently generic embedding, $\II$ has $g$-transverse order $1$. In this article, $\II$ is viewed as a section of the symmetric square of the cotangent bundle on $\Sigma$.

Following the above in the Riemannian setting, one can characterize the invariants of a conformal hypersurface embedding $\Sigma \hookrightarrow (M,\cc)$ that respect the underlying conformal structure. In particular, given some metric $g \in \cc$ and an invariant $i^g$ of the Riemannian manifold $(M,g)$, if there exists some $w \in \mathbb{R}$ such that for any $\Omega$, we have $i^{\Omega^2 g} = \Omega^w i^g$ , then we say that such an invariant is \textit{conformally invariant} and has \textit{weight} $w$.

The task of characterizing such hypersurface invariants is unsurprisingly more subtle than in the Riemannian setting, as the Ricci calculus is manifestly diffeomorphism but not conformally invariant. Nonetheless, there exist several well-known examples, one of which is the trace-free part (with respect to the induced metric on $\Sigma$) of the second fundamental form, $\IIo := \II - H \bar{g}$; bars $\bar{\bullet}$ are used to represent objects induced on the hypersurface $\Sigma$. For $\Sigma \hookrightarrow (M,\cc)$ and $g,\Omega^2 g \in \cc$, we have that
$$\IIo^{\Omega^2 g} = \Omega \IIo^{g}\,,$$
where $\IIo := \II - H \bar{g}$ is the trace-free part of the second fundamental form for its associated embedding. Note that here and elsewhere in this article we implicitly use the restriction of $\Omega$ to the hypersurface $\Sigma$. In this article, we are interested in characterizing conformally invariant tensors that are hypersurface invariants; we call these tensors \textit{conformal hypersurface invariants}.


Analogous to the above Riemannian discussion, we can describe a conformal hypersurface embedding $\Sigma \hookrightarrow (M^d,\cc)$ by defining functions. In Lemma~\ref{to-hyper}, we prove that the $g$-transverse order of a representative of a conformal hypersurface invariant is compatible with the conformal structure, and thus introduce the notion of $\cc$-transverse order. Thus we can also organize conformal hypersurface invariants using this integer. For sufficiently general conformal hypersurface embeddings $\Sigma \hookrightarrow (M,\cc)$, $\IIo$ has $\cc$-transverse order $1$.

In addition to the trace-free second fundamental form, there is an entire family of trace-free symmetric rank-$2$ tensor-valued conformal hypersurface invariants with increasing $\cc$-transverse order and decreasing weight: the so-called \textit{conformal fundamental forms} $\{\IIo,\IIIo,\IVo, \dots\}$ as described in~\cite{Blitz1}. While in general, these tensors are challenging to compute, low-lying results are known~\cite{BGW1}:
\begin{align*}
\IIIo_{ab} =&\,  W_{\hat n ab \hat n} \\
\IVo_{ab} =&\, C_{\hat n(ab)}^\top + H W_{\hat n ab \hat n} + \tfrac{1}{d-5} \bar{\nabla}^c W_{c(ab) \hat{n}}^\top  \text{ for } d =4 \text{ or } d \geq6\,.
\end{align*}
A formula for $\Vo$ is lengthy but also given in~\cite{BGW1}. The superscript $\top$ denotes the projection of the tensor to the hypersurface (and $\otop$ denotes the trace-free equivalent), $W$ denotes the Weyl tensor, and $C$ denotes the Cotton tensor. (See Section~\ref{conventions} for more information on our conventions, and see Section~\ref{conf-hyp-sec} for a precise definition of a conformal fundamental form.) Many previous results in the mathematics literature can be expressed in terms of such tensors~\cite{Anderson,hypersurface_old,AGW,StChang,BGW2}, which suggests that they play a key role in the construction of conformal hypersurface invariants. Furthermore, in the physics literature such tensors have appeared as well~\cite{Kop,MarsPeon}. Indeed, one finds that for conformal hypersurface invariants with sufficiently small $\cc$-transverse order, the extrinsic information of the embedding $\Sigma \hookrightarrow (M,\cc)$ is captured entirely by the conformal fundamental forms. We formalize this observation in the main result of this article.
\begin{theorem} \label{main-result}
Let $\Sigma \hookrightarrow (M,\cc)$ with be a smoothly embedded conformal hypersurface and let $I$ be a natural conformal hypersurface invariant with $\cc$-transverse order $k \in \mathbb{Z}_{\geq 0}$. Then, if there exists a set of conformal fundamental forms $\{\IIo, \ldots, \FF{k+1}\}$ for $\Sigma \hookrightarrow (M,\cc)$, for any $g \in \cc$, there exists a formula for $I$ expressed as a (partial) contraction polynomial in elements of the set
$$\{\bar{g}, \bar{g}^{-1}, \hat{n}, \bar{\nabla}, \bar{R}, \IIo, \dots, \FF{k+1}\}\,.$$
\end{theorem}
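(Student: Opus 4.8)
The plan is to reduce $I$ to the classical Riemannian classification of natural hypersurface invariants and then strip away the non-conformal data by a downward induction on $\cc$-transverse order, with the conformal fundamental forms playing the role of conformally corrected carriers of the extrinsic normal jets of the bulk metric. First I would fix $g\in\cc$ and work in Fermi (geodesic normal) coordinates $(s,x^i)$ adapted to $\Sigma$, so that $g = ds^2 + h_s$ with $h_0 = \bar{g}$ and $\partial_s h_0$ a nonzero multiple of $\II$. Since a natural hypersurface invariant is a universal contraction polynomial in a finite jet of $g$ and of a defining function, the first fundamental theorem of invariant theory for the orthogonal group --- applied after repeatedly using the Gau\ss\ and Codazzi equations and the radial Riccati equation to reorganise the Taylor coefficients $\partial_s^j h_0$ --- lets one write $I = I^g$ as a partial contraction polynomial in $\bar{g},\ \bar{g}^{-1},\ \hat{n},\ \bar{\nabla},\ \bar{R},\ \II$ and the restrictions to $\Sigma$ of the iterated ambient covariant derivatives $\nabla^{(j)} R$ for $0\le j\le k-2$. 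Lemma~\ref{to-hyper} is precisely what makes the bound $j\le k-2$ conformally meaningful, since a representative of a $\cc$-transverse order $k$ invariant has $g$-transverse order $k$ for every $g\in\cc$. Call this expression the Riemannian skeleton of $I$; it is not yet of the desired form because it may still contain undifferentiated higher ambient-curvature jets, the mean curvature $H$, and tangential derivatives of $H$.

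Next I would replace these Riemannian building blocks by conformally adapted ones. Decompose the ambient Riemann tensor into its Weyl part $W$ and its Schouten part $\Rho$, and more generally split each $\nabla^{(j)}R|_\Sigma$ into tangential/normal slots using $\hat{n}$ and the tangential projector built from $\bar{g},\bar{g}^{-1},\hat{n}$. The totally tangential slots reduce, via the conformal Gau\ss\ and Codazzi equations, to $\bar{R}$, $\bar{\nabla}\IIo$ and $\IIo$-polynomials; slots with three or more normal indices reduce by the trace-freeness and Bianchi symmetries of $W$; and the genuinely new extrinsic content at $\cc$-transverse order $m$ is the slot of $\nabla^{(m-2)}R|_\Sigma$ with two normal and two tangential indices, together with the transverse jet $\partial_s^{m-1}\II$. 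By construction~\cite{Blitz1,BGW1}, the conformal fundamental form $\FF{m}$ is exactly the conformally invariant completion of this two-normal-index content: it has $\cc$-transverse order $m-1$, its leading symbol (the coefficient of the top transverse jet) is a nonzero multiple of that slot, and all of its remaining terms have strictly smaller $\cc$-transverse order and are themselves conformal invariants expressible in $\{\bar{g},\bar{g}^{-1},\hat{n},\bar{\nabla},\bar{R},\IIo,\dots,\FF{m-1}\}$. In particular the non-conformal Schouten data $\Rho$ never survives on its own: it enters only through the combinations that assemble into the $\FF{m}$.

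With this in hand I would induct downward on $k$. Given $I$ of $\cc$-transverse order $k$, isolate the part of its Riemannian skeleton that depends on the order-$k$ transverse data. Because $I$ is conformally invariant of some weight $w$ and, by Lemma~\ref{to-hyper}, its transverse order is conformally well defined, this ``top symbol'' is a conformally covariant hypersurface tensor, and by the previous paragraph it must be a partial contraction polynomial in $\FF{k+1}$ and in objects of strictly smaller $\cc$-transverse order drawn from the allowed list. Subtracting the corresponding polynomial in $\FF{k+1}$ produces a natural conformal hypersurface invariant $I'$ of $\cc$-transverse order $\le k-1$. Iterating down to $\cc$-transverse order $\le 1$ leaves, at order $0$, an intrinsic conformal invariant of $(\Sigma,\bar{\cc})$ together with $\hat{n}$ --- hence a partial contraction polynomial in $\bar{g},\bar{g}^{-1},\hat{n},\bar{\nabla},\bar{R}$ --- and at order $1$ only the single additional generator $\IIo$, since $H$ is not conformally invariant and so cannot appear undifferentiated. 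This establishes the claimed formula.

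The main obstacle is the middle step together with the ``top symbol'' claim: one must prove that, modulo strictly smaller $\cc$-transverse order, all conformally covariant extrinsic data at order $m$ is exhausted by $\FF{m}$, equivalently that the non-invariant normal-jet data --- the undifferentiated higher jets of $h_s$, together with $H$, $\bar{\nabla}H$ and their companions --- is forced to cancel against the inhomogeneous terms produced by the conformal transformation law of $\FF{m}$. This is exactly where the hypothesis that a set $\{\IIo,\dots,\FF{k+1}\}$ exists is indispensable: in the dimensions and orders where the obstruction to the existence of $\FF{m}$ is nonzero --- for instance the $\tfrac{1}{d-5}$ pole appearing in $\IVo$ at $d=5$ --- no such cancellation mechanism is available and the conclusion genuinely fails, whereas whenever $\FF{m}$ does exist its defining property supplies precisely the required cancellation. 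Making the ``no undifferentiated $H$ or higher jet'' reduction rigorous is most cleanly done in the conformal tractor calculus, where the ambient curvature jets are repackaged into tractor-curvature derivatives and the extrinsic data into the normal tractor and its covariant derivatives, following the bookkeeping of~\cite{BGW1,BGW2}.
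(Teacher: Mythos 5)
Your outline tracks the paper's own strategy fairly closely: reduce to the Riemannian classification (Proposition~\ref{easy-set} and Theorem~\ref{riem-invs-general}), observe that the surviving extrinsic data at transverse order $m$ is $\otop\colon\nabla_{\hat n}^{m-3}\colon P$ and that $\FF{m}$ agrees with a nonzero multiple of it modulo lower transverse order (Corollary~\ref{all-FFs}), and then eliminate the non-conformal scalars. The genuine gap is that you explicitly defer the decisive step---showing that $H$ and the normal jets of $J$ (your ``companions'') cannot appear---labelling it ``the main obstacle'' and pointing to tractor calculus without carrying out the argument. This is precisely where the conformal invariance of $I$ must be used quantitatively, and without it the theorem is not proved. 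The paper closes it by a direct conformal-variation argument: with $\Upsilon=d\log\Omega$ one has $H^{\Omega^2 g}=\Omega^{-1}(H^g+\hat n\cdot\Upsilon)$, and $\colon(\nabla^{\Omega^2 g})_{\hat n}^{m}\colon J^{\Omega^2 g}|_{\Sigma}$ contains the inhomogeneous term $-\Omega^{-m-2}\colon(\nabla^g)_{\hat n}^{m}\colon\nabla^g\!\cdot\Upsilon|_{\Sigma}$; taking $m$ maximal and counting the maximal number $\ell$ of such factors in any summand, these $\Upsilon$-terms cannot be cancelled by the variation of any other generator, so conformal invariance forces the offending coefficients to vanish at $\Omega=1$, a contradiction.

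Two further assertions in your middle step also need proof rather than assertion. First, the claim that the ``top symbol'' of $I$ is itself a conformally covariant hypersurface tensor, and that every conformally covariant trace-free rank-two candidate of the right weight and transverse order is a multiple of $\FF{k+1}$, is exactly the content of Corollary~\ref{all-FFs}; the paper establishes it by a counting argument on conformal weight and tensor valence applied to the generating set of Proposition~\ref{easy-set}, combined with Lemma~\ref{reduce-to}. It cannot be taken for granted, since a priori the top-order part of a conformal invariant is only covariant modulo lower-order terms. Second, your statement that the lower-order tail of $\FF{m}$ is ``itself a conformal invariant expressible in $\{\bar g,\bar g^{-1},\hat n,\bar\nabla,\bar R,\IIo,\dots,\FF{m-1}\}$'' is not a property the definition of a conformal fundamental form supplies---only $\FF{m}$ as a whole is invariant---so the downward induction as you state it does not quite close; the paper instead substitutes the fundamental forms for the Schouten jets and then runs the cancellation argument above on whatever non-invariant generators remain.
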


\noindent
Loosely, natural conformal hypersurface invariants of $\Sigma \hookrightarrow (M,\cc)$ are those conformal hypersurface invariants whose representatives can be expressed in terms of a polynomial in a defining function, its derivatives, the metric $g$, its Levi-Civita connection, and its curvatures (all restricted to $\Sigma$)---see Definition~\ref{NHI} and Definition~\ref{NCHI} for more details, and see Section~\ref{conventions} for a description of (partial) contraction polynomials.

The proof of Theorem~\ref{main-result} is given in Section~\ref{conf-hyp-sec}. For a generic conformal hypersurface embedding, the existence of conformal fundamental forms is only well-understood in certain cases. In Section~\ref{conf-hyp-sec} we describe the state-of-the-art characterization of these conformally-invariant tensors and in Corollary~\ref{specialization} put concrete bounds on the constructability of the invariants arising in Theorem~\ref{main-result}.



\subsection{Conventions} \label{conventions}

In this section we detail our geometric conventions and notations. For the remainder of this article, we will assume that $M$ is a smooth manifold with dimension $d \geq 4$, and we will equip this manifold with either a metric $g$ or a conformal class of metrics $\cc$, both of which will be positive definite. Viewed as a Riemannian manifold, $(M,g)$ has a unique Levi-Civita connection $\nabla$ and curvature tensor
$$R(x,y)z = \nabla_x \nabla_y z - \nabla_y \nabla_x z - \nabla_{[x,y]} z\,,$$
where $x$, $y$, and $z$ are vector fields on $M$ and $[\pdot, \pdot]$ is the Lie bracket.

In this article, we will use Penrose's abstract index notation (with early Latin letters) to represent tensors and their contractions via the Einstein summation notation. For example, we can express the above equation for the Riemann curvature in the following abstract index notation:
$$x^a y^b R_{ab}{}^c{}_d z^d = x^a \nabla_a y^b \nabla_b z^c - y^a \nabla_a x^b \nabla_b z^c - [x^a (\nabla_a y^b) - y^a (\nabla_a x^b)] \nabla_b z^c\,.$$
In this notation, we will use round brackets to represent the symmetric part of a given tensor structure. That is, we will write $v_{a(bc)} := \tfrac{1}{2}(v_{abc} + v_{acb})$. As is typical, we will use the metric and its inverse to raise and lower indices.

We will sometimes replace the index of a tensor with the symbolic name of a (co)vector, so that if $u$ and $v$ are vectors at a point $p \in M$ and $T$ is a rank-$2$ covariant tensor at $p$, we can express $T(u,v) = u^a v^b T_{ab} \equiv T_{uv}$. The same notation will be used for the dual inner product associated with $g$, so that if $\alpha$ and $\beta$ are covectors at a point $p \in M$, then
$$T(g^{-1}(\alpha,\pdot), g^{-1}(\beta,\pdot)) \equiv g^{ab} g^{cd} \alpha_a \beta_c T_{bd} \equiv T_{\alpha \beta}\,.$$
We will also use the $|\pdot|_g$ notation to refer to the metric norm of a (co)vector, \textit{i.e.} for $v$ a vector and $\alpha$ a covector at a point $p \in M$, we say that $|v|^2_g= g(v,v)$ and $|\alpha|^2_g = g^{-1}(\alpha,\alpha)$, where $g$ is defined at $p$. By positive definiteness, we will always be able to take the square root to obtain $|v|_g$ and $|\alpha|_g$, respectively. Note that these notions extend in the obvious way to (co)vector and tensor fields.

A trace-correction of the Ricci tensor known as the Schouten tensor makes heavy appearance in this article. For the Ricci tensor given by the trace of the Riemann tensor $Ric_{ab} = R_{ca}{}^c{}_b$, the Schouten tensor is given by
$$P = \frac{1}{d-2} \left(Ric - \frac{Sc}{2(d-1)} g\right)\,,$$
where $Sc$ is the scalar curvature $Sc := Ric^a_a$, and we denote the trace of the Schouten tensor by $J := P_a^a$. Using this tensor, we can decompose the Riemann curvature into a trace-free part, the Weyl tensor $W$, and traceful parts:
$$R_{abcd} = W_{abcd} + g_{ac} P_{bd} - g_{bc} P_{ad} - g_{ad} P_{bc} + g_{bd} P_{ac}\,.$$
The divergence of the Weyl tensor in $d \geq 4$ yields the Cotton tensor:
$$C_{abc} = \tfrac{1}{d-3} \nabla^d W_{dcab}\,.$$

For a given Riemannian manifold $(M,g)$, we say that the metric $g$ is \textit{generic} when $g$ and its partial derivatives do not satisfy any identities beyond those that hold for every metric.

Given a collection of tensors with various tensor structures, we say that a \textit{contraction polynomial} in that collection of tensors is any linear combination of monomials formed by contracting the tensors in the family with each other in any way that forms a scalar. Examples using just the metric, its inverse, and the Riemann curvature include $\Sc$, $R_{abcd} R^{abcd}$, their sum, and more.
Note that the Levi-Civita connection can appear in such a family, in which case the order of terms in a monomial must be preserved and brackets must be used to specify upon which tensors the covariant derivative acts. For a \textit{partial contraction polynomial}, each monomial can be tensor-valued, however each monomial in a linear combination forming a given partial contraction polynomial must be of the same tensor type. Examples using just the metric, its inverse, and curvatures include $R_{abcd}$, $Ric_a{}^b Ric_{bc}$, and $ Ric_a{}^b Ric_{bc} +2  Sc \, Ric_{ac}$.

\section{Natural Hypersurface Invariants} \label{riem-hyp-sec}

We begin by providing a concrete definition of the hypersurface invariants described in the introduction as well as the notions of transverse order.
\begin{definition} \label{NHI}
Let $\Sigma \hookrightarrow (M,g)$ be a smoothly embedded hypersurface with $g$ generic and let $s$ be any defining function for $\Sigma$. Let $I[g,s]$ be the restriction to $\Sigma$ of a (partial) contraction polynomial in the set $\{s, |ds|^{-1}_g, g, g^{-1}, \nabla, R\}$. Then $I[g,s]$ is a \textit{natural hypersurface invariant} (NHI) when $I[g,s] = I[g,\tilde{s}]$ for any defining functions $s,\tilde{s}$ for $\Sigma$.

Furthermore, let $\mathsf{P}[g,s]$ be a polynomial in $\nabla$ with coefficients in (tensor-valued, when appropriate) NHIs. Then we say that $\mathsf{O}[g,s] := \Sigma \circ \mathsf{P}[g,s]$ is a \textit{natural hypersurface operator} (NHO), where the restriction operator $\Sigma$ restricts tensors on $M$ to $\Sigma$.
\end{definition}

%

Sometimes when it is clear from context, we will drop the metric dependence of an NHI. Furthermore, because for a given hypersurface embedding $\Sigma \hookrightarrow (M,g)$, an NHI is independent of choice of defining function $s$, we will also sometimes drop the dependence on the defining function when the particular embedding is specified.

Now, given a smooth hypersurface embedding $\Sigma \hookrightarrow (M,g)$ with $g$ generic and an arbitrary defining function $s$, we can treat $\Sigma$ as a subset of $M$ and thus consider a collar neighborhood $U := [0,\epsilon) \times \Sigma \subset M$ for some $0 < \epsilon$. On $U$ we can choose local coordinates $(s,y^i)$ for $i \in \{1, \ldots, d-1\}$ such that, on $\Sigma$, $\partial/\partial s$ is orthogonal to $\Sigma$ and each $\partial/\partial y^i$ is tangent to $\Sigma$.
In these coordinates, we see that $|ds|_g^{-1} = 1/\sqrt{g^{ss}}$, the components of the inverse metric, the Christoffel symbols, and the Riemann curvature are all expressible in terms of coordinate components of the metric and their partial derivatives. So, any scalar-valued NHI is expressible as a polynomial in $s$, the coordinate components of the metric and its inverse, $1/\sqrt{g^{ss}}$, and their partial derivatives. Similarly, the coordinate components of tensor-valued NHIs will be expressible in the same. However, these coordinate representations are not, in general, unique.

Given an NHI, we can study its properties by examining the set of its coordinate representations when evaluated on a generic metric. In any given coordinate representation, there exists a non-negative integer $k$ such that, along $\Sigma$, no metric components $g_{ab}$ are acted upon by more than $k$ normal derivatives $\partial_s^k$ (note that this includes derivatives of inverse metric components and derivatives of $1/\sqrt{g^{ss}}$ by the Leibniz rule). However, this integer $k$ is not an invariant of the NHI. As an example, consider the following coordinate representations for the same object: $\partial_s g_{ab}$ and $\partial_s g_{ab} + \frac{1}{2}(\partial_s^2 g_{ab} - \partial_s^2 g_{ba})$. Clearly, such coordinate representations yield same the same object but the maximal derivative countings are distinct. But because it is bounded below, we can minimize over all such coordinate representations to obtain an invariant of a given NHI. We define the \textit{transverse order} of an NHI as the minimal $k$ over all such coordinate representations when evaluated on a generic metric. Often, we will discuss NHIs evaluated on a particular (non-generic) metric $g$; in that case, we define the \textit{$g$-transverse order} of an NHI as the minimal $k$ over all such coordinate representations when evaluated on the metric $g$. Note that the $g$-transverse order of an NHI is at most equal to its transverse order.

We can also study various orders of NHOs. Given a conformal hypersurface embedding $\Sigma \hookrightarrow (M,g)$ with generic $g$ and such an operator $\mathsf{O}[g,s]$, we say that it has \textit{transverse differential order} $k \in \mathbb{Z}_{\geq 0}$ when there exists $v$ in the domain of $\mathsf{O}$ such that $\mathsf{O}(s^{k} v) \neq 0$ but, for every $v'$ in the domain of $\mathsf{O}$, we have that $\mathsf{O}(s^{k+1} v') = 0$. We also say that $\mathsf{O}$ has \textit{transverse coefficient order} $\ell \in \mathbb{Z}_{\geq 0}$ when, amongst coefficient NHIs of $\mathsf{O}$, the maximal such transverse order is $\ell$. As before, when we are interested in the transverse orders of such an operator when evaluated on a  particular metric $g$, we analogously define \textit{$g$-transverse differential order} and \textit{$g$-transverse coefficient order}. As before, note that these transverse orders are at most the transverse differential order and the transverse coefficient order, respectively.

\medskip

While the given definition of an NHI is useful for considering transverse order, it is somewhat impractical for calculational purposes. Fortunately, any NHI is expressible using a more familiar set of tensors, as in the following proposition of Gover and Waldron~\cite{Will1}.

\begin{proposition} \label{easy-set}
Let $\Sigma \hookrightarrow (M^d,g)$ be a smoothly embedded hypersurface, let $s$ be any defining function for $\Sigma$, and let $I[g,s]$ be an NHI for $\Sigma \hookrightarrow (M,g)$. Further, we define  $\hat{n} := \frac{ds}{|ds|_g}\big|_{\Sigma}$. Then, $I[g,s]$ can be expressed as the restriction to $\Sigma$ of a (partial) contraction polynomial in
$$\{g\,, g^{-1}\,,\hat{n}\,, \II, \bar{\nabla} \II, \ldots, \bar{\nabla}^\ell \II, R, \nabla R, \ldots, \nabla^m R\}\,,$$
for some finite non-negative integers $\ell$ and $m$.
\end{proposition}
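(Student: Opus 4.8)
The plan is to work in the collar coordinates $(s,y^i)$ of a neighborhood $U=[0,\epsilon)\times\Sigma$ and show that every tensor appearing in the defining data of an NHI---namely $s$, $|ds|_g^{-1}$, $g$, $g^{-1}$, $\nabla$, $R$, together with all their partial derivatives and contractions that survive restriction to $\Sigma$---can be rewritten, after restriction, as a (partial) contraction polynomial in the listed ``familiar'' set. First I would dispose of the easy terms: the factor $s$ restricts to $0$, so any monomial in which an undifferentiated $s$ survives contributes nothing, and a monomial in which $s$ is differentiated an appropriate number of times before restriction is governed by the Gau\ss-lemma normalization of the collar; in these coordinates $ds$ restricts to a conormal and $|ds|_g^{-1}\big|_\Sigma = 1/\sqrt{g^{ss}}\big|_\Sigma$, so $\hat n = ds/|ds|_g\big|_\Sigma$ is exactly the unit conormal and all occurrences of $|ds|_g^{-1}$ and the conormal direction can be packaged into $\hat n$ (and its raised version via $g^{-1}$). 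The metric and inverse metric restrict to $g$ and $g^{-1}$ on $M$ along $\Sigma$, which the statement allows; what remains is to handle $\nabla$ and the curvature $R$ together with their transverse and tangential derivatives.

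The core of the argument is the standard reduction of covariant derivatives of bulk tensors restricted to $\Sigma$ into \emph{tangential} derivatives $\bar\nabla$ plus correction terms built from $\hat n$ and $\II$. The key tool is the Gau\ss--Weingarten decomposition: for any bulk tensor field $T$, the ambient covariant derivative $\nabla T$ restricted to $\Sigma$ splits into its purely tangential part, which is $\bar\nabla$ of the restriction up to curvature and second-fundamental-form corrections (Gau\ss\ equation), and its normal component, which is controlled by the Weingarten map $\nabla_a \hat n^b = \II_a{}^b$ (plus terms involving the mean curvature and $\hat n$), using that $g$ is generic so no accidental identities intervene. The point is that a single normal derivative $\nabla_{\hat n}$ of the metric data is exactly what produces $\II$, so every explicit $\nabla_{\hat n}$ acting on a curvature (or lower-order tensor) can be traded, via commuting it past the restriction and using the contracted Gau\ss, Codazzi, and second Bianchi identities, for a polynomial expression in $R$, $\nabla R$, $\II$, $\bar\nabla\II$, and contractions with $\hat n$, $g$, $g^{-1}$. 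Iterating this on a monomial with at most $m$ ambient derivatives of $R$ and at most (some bounded number of) normal derivatives shows that the result lies in the span of partial contraction polynomials in $\{g,g^{-1},\hat n,\II,\bar\nabla\II,\dots,\bar\nabla^\ell\II, R,\nabla R,\dots,\nabla^m R\}$ for $\ell,m$ bounded in terms of the (finite) number of derivatives appearing in the original NHI. Since an NHI is by hypothesis a finite contraction polynomial in the basic set, the integers $\ell$ and $m$ are finite, as claimed; moreover one may, if desired, keep the ambient curvature derivatives $\nabla^j R$ rather than further decomposing them (the statement permits both $\bar\nabla^\ell\II$ and $\nabla^m R$), which simplifies the bookkeeping.

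The main obstacle is organizing the induction on the number and type (normal versus tangential) of derivatives so that the corrections generated at each step do not outrun the claimed finite bound---each time a normal derivative is pushed through, it can create both an extra $\bar\nabla$ on a lower-slot object and extra $\II$-factors, so one must set up a suitable lexicographic complexity (e.g. total ambient derivative count, with ties broken by number of surviving normal derivatives) that strictly decreases under the Gau\ss--Codazzi--Weingarten rewriting. A secondary subtlety is ensuring well-definedness independent of the coordinate chart and defining function: since $I[g,s]$ is assumed to be an NHI (independent of $s$), the output is automatically $s$-independent, but one should note that the decomposition above is performed in any adapted collar and the resulting contraction polynomial is manifestly coordinate-free, so no ambiguity arises. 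I expect the verification of these identities to be entirely routine given the genericity of $g$; the only real content is the complexity-decrease bookkeeping for the induction.
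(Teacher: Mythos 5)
There is a genuine gap, and it sits exactly where you declare the problem ``easy.'' An NHI is a contraction polynomial in $\{s,|ds|_g^{-1},g,g^{-1},\nabla,R\}$, so its monomials may contain factors such as $\nabla_a\nabla_b s$, $\nabla_a\nabla_b\nabla_c s$, or derivatives of $|ds|_g^{-1}$. For an \emph{arbitrary} defining function these restricted jets are not hypersurface data at all: the tangential--tangential part of $\nabla^2 s|_\Sigma$ is controlled by $\II$, but the component $\nabla_{\hat n}\nabla_{\hat n}s|_\Sigma$ (and all higher transverse jets) depends on how $s$ extends off $\Sigma$ and can be prescribed freely. So these terms cannot individually be rewritten in the target set; only the total sum $I[g,s]$ is $s$-independent. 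The paper's proof therefore makes the defining-function independence the \emph{first} step: replace $s$ by the unit-length defining function $\tilde s$ with $|d\tilde s|_g^2=1$ (which exists near $\Sigma$), for which $|d\tilde s|_g^{-1}\equiv 1$ and, crucially, $\nabla^k\tilde s|_\Sigma$ is expressible in $\{g,g^{-1},\hat n,\II,\bar\nabla\II,\dots,R,\nabla R,\dots\}$ by the cited result of Gover--Waldron. Your proposal gestures at this (``governed by the Gau\ss-lemma normalization of the collar'') but relegates $s$-independence to a closing remark about well-definedness, i.e.\ to the wrong logical position: without first invoking $I[g,s]=I[g,\tilde s]$ you have no right to assume the collar is Gaussian, and without the jet formula for $\nabla^k\tilde s|_\Sigma$ you have no argument that the $s$-dependent monomials land in the target set. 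That formula (or a proof of it) is the entire content of the proposition and is missing from your write-up.

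A secondary point: most of your effort goes into an induction that trades normal derivatives of curvature for $\II$, $\bar\nabla$, and Gau\ss--Codazzi corrections. That work is unnecessary here, because the target set deliberately retains the \emph{ambient} objects $R,\nabla R,\dots,\nabla^m R$ and allows contractions with $\hat n$; any curvature monomial in the NHI is already of the required form upon restriction. That decomposition machinery is what the paper uses later, in Lemma~\ref{reduce-to} and Theorem~\ref{riem-invs-general}, not in this proposition. So the proposal both omits the hard step and elaborates a step that is not needed.
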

\begin{proof}
First, observe that $I[g,s]$ is independent of the choice of defining function $s$. From~\cite[Section 3.3]{Wald}, in a neighborhood of $\Sigma$ there always exists a defining function $\tilde{s}$ for $\Sigma$ such that $|d \tilde{s}|_g^2 = 1$. Thus, we have that $I[g,s] = I[g,\tilde{s}]$, and so in particular we can write $I[g,s]$ as the restriction to $\Sigma$ of a (partial) contraction polynomial in $\{\tilde{s}, g, g^{-1}, \nabla, R\}$. However, Gover and Waldron~\cite[Proposition 2.7]{Will1} show that if $|d\tilde{s}|^2_g = 1$, then for any integer $0 \leq k$ we have that $\nabla^k \tilde{s}|_{\Sigma}$ can be expressed as the restriction to $\Sigma$ of a partial contraction polynomial in
$$\{g\,, g^{-1}\,,\hat{\tilde{n}}\,, \II, \bar{\nabla} \II, \ldots, \bar{\nabla}^\ell \II, R, \nabla R, \ldots, \nabla^m R\}$$
for some finite non-negative integers $\ell$ and $m$. So, $I[g,\tilde{s}]$---and hence $I[g,s]$---can be expressed as such a (partial) contraction polynomial.  Finally, note that $\hat{n} = \hat{\tilde{n}}$ and that $\II$ is independent of the choice of defining function. Thus, we can express $I[g,s]$ as the restriction to $\Sigma$ of a (partial) contraction polynomial in 
$$\{g\,, g^{-1}\,,\hat{n}\,, \II, \bar{\nabla} \II, \ldots, \bar{\nabla}^\ell \II, R, \nabla R, \ldots, \nabla^m R\}\,.$$
\end{proof}
%

We are now positioned to analyze these Riemannian invariants with a finer-toothed comb. To do so, we first define a useful equivalence relation on NHIs that characterize them by their leading transverse derivative orders.

\begin{definition}
Let $\Sigma \hookrightarrow (M,g)$ be a smoothly embedded hypersurface, fix $k \in \mathbb{Z}_{\geq 1}$, and let $A$ and $B$ be two NHIs of the same tensor type. We
define (for $k \geq 1$) the equivalence relation~$\tsim{k}$ on NHIs by equating $A \tsim{k} B$ when $A - B$ has $g$-transverse order at most $k-1$.
%
\end{definition}

With this definition in hand, we can examine the transverse order of various projections of normal derivatives of the Riemann curvature tensor:
\begin{lemma} \label{reduce-to}
Let $\Sigma \hookrightarrow (M,g)$ be a smoothly embedded hypersurface. Then, for for every integer $m \geq 0$, the following equivalences hold:
\begin{align*}
\top \colon \nabla^m_{\hat n} \colon R_{abcd}  &\tsim{m+2} 0 \\
\top \hat{n}^d \colon \nabla^m_{\hat n} \colon R_{dabc} &\tsim{m+2} 0 \\
\top \hat{n}^c \hat{n}^d \colon \nabla^m_{\hat n} \colon R_{cabd} &\tsim{m+2} -(d-2) \otop \colon \nabla^m_{\hat n} \colon P_{ab} - \bar{g}_{ab} \colon \nabla^m_{\hat n} \colon J \\
\top \colon \nabla^m_{\hat n} \colon Ric_{ab} &\tsim{m+2}  (d-2) \otop \colon \nabla^m_{\hat n} \colon P_{ab} + \bar{g}_{ab} \colon \nabla^m_{\hat n} \colon J \\
\top \hat{n}^b \colon \nabla^m_{\hat n} \colon Ric_{ab} &\tsim{m+2} 0 \\
\hat{n}^a \hat{n}^b \colon \nabla^m_{\hat n} \colon Ric_{ab} &\tsim{m+2} (d-1) \colon \nabla^m_{\hat n} \colon J \\
\colon \nabla^m_{\hat n} \colon Sc|_{\Sigma} &\tsim{m+2} 2(d-1) \colon \nabla^m_{\hat n} \colon J|_{\Sigma}\,.
\end{align*}
In the above, we denote $\colon \nabla^m_{\hat n} \colon := \hat{n}^{a_1} \cdots \hat{n}^{a_m} \nabla_{a_1} \cdots \nabla_{a_m}$.
\end{lemma}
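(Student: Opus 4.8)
The plan is to prove all seven equivalences simultaneously by induction on $m$, exploiting the fact that each identity relates the transverse-order-$(m+2)$ part of a projected normal derivative of curvature to the same part of $\colon \nabla^m_{\hat n} \colon P_{ab}$ and $\colon \nabla^m_{\hat n} \colon J$. First I would fix a unit-speed defining function $\tilde s$ (so $|d\tilde s|_g^2=1$, available by \cite[Section 3.3]{Wald}) and Gau\ss\ normal coordinates $(s,y^i)$ in which $g = ds^2 + h_s$, so that $\hat n = ds|_\Sigma$ and $\colon \nabla^m_{\hat n} \colon$ is, up to Christoffel-symbol corrections that lower transverse order, just $\partial_s^m$. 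The key bookkeeping observation is that in these coordinates $\partial_s^m R_{abcd}$ contains $\partial_s^{m+2} h_{ij}$ (transverse order $m+2$) while every connection coefficient $\Gamma$ carries one $\partial_s$, so contracting indices of $\nabla^m R$ against $\hat n$ versus projecting tangentially changes the transverse order by at most the obvious amount; modulo $\tsim{m+2}$ we may freely commute covariant derivatives, drop all $\Gamma$-corrections, and replace $\nabla_{\hat n}$ by $\partial_s$.

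Next I would establish the first two equivalences by a symmetry argument: the fully tangential projection $\top \colon \partial_s^m \colon R_{ijkl}$ and the mixed projection $\top \hat n^d \colon \partial_s^m\colon R_{dijk}$ are, modulo transverse order $m+1$, built algebraically from $\partial_s^{m+2} h$, but the Gau\ss--Codazzi equations express exactly these projections of $R$ in terms of the intrinsic curvature $\bar R$ of $h_s$, the second fundamental form $\II = -\tfrac12 \partial_s h$, and tangential derivatives thereof — all of which have transverse order at most $m+1$ when differentiated $m$ times in $s$ (since $\bar R$ of $h_s$ involves $\partial_s^m$ acting on tangential derivatives of $h$, hence at most $\partial_s^m h$ at the leading $s$-order, not $\partial_s^{m+2}$). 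Hence both are $\tsim{m+2} 0$. For the third equivalence I would use the radial-curvature (Riccati/Jacobi) identity $R_{sijs} = -\partial_s \II_{ij} + \II_{ik}\II^k_{\ j}$ (Gau\ss\ lemma form), whose $m$-fold $\partial_s$-derivative's leading term is $-\tfrac12\partial_s^{m+2} h_{ij}$; comparing with the Schouten decomposition $R_{abcd} = W_{abcd} + g\owedge P$ and noting $W$ is trace-free, I extract that $\top\hat n^c\hat n^d\colon\nabla^m_{\hat n}\colon R_{cabd}$ agrees modulo $\tsim{m+2}$ with $-\colon\nabla^m_{\hat n}\colon P_{ab}$ up to the Weyl piece, and then split $P_{ab}$ into its trace-free tangential part $\otop\colon\nabla^m_{\hat n}\colon P_{ab}$ and trace $\bar g_{ab}\colon\nabla^m_{\hat n}\colon J$ — but care is needed because $\colon\nabla^m_{\hat n}\colon P$ is not the same as the tangential projection of $\nabla^m P$; I would track the difference and show it has lower transverse order, giving the stated $-(d-2)\otop\colon\nabla^m_{\hat n}\colon P_{ab} - \bar g_{ab}\colon\nabla^m_{\hat n}\colon J$. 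The remaining four lines follow by tracing: the Ricci identities come from contracting the Riemann equivalences with $g^{ac}$ (splitting the trace into tangential, mixed, and normal-normal pieces and applying the three Riemann results just proved), and the scalar-curvature line comes from one further trace, using $Ric = (d-2)P + Jg$ and $Sc = 2(d-1)J$.

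The main obstacle I anticipate is the careful transverse-order accounting in the third identity — precisely pinning down that the coefficient is $-(d-2)$ and not something else, and that the Weyl-tensor contribution, the commutator terms from reordering $\nabla_{\hat n}$'s past the projector, and the discrepancy between $\colon\nabla^m_{\hat n}\colon(\text{projected } P)$ and $\text{projected}(\nabla^m P)$ all genuinely drop to transverse order $\leq m+1$. This requires a clean lemma, which I would state and prove first: for any NHI-valued tensor $T$, $\top\colon\nabla^m_{\hat n}\colon T$ and $\colon\nabla^m_{\hat n}\colon (\top T)$ differ by a term of $g$-transverse order at most $m + (\text{transverse order of }T) - 1$, and similarly commuting $\nabla_{\hat n}$ with a tangential covariant derivative $\bar\nabla$ costs one transverse order (via $[\nabla_{\hat n},\bar\nabla]\sim \II\cdot\nabla$, and $\II$ has transverse order $1$). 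Granting this lemma, the whole statement reduces to matching leading $\partial_s^{m+2}h$ coefficients, which is the routine part; I would organize the induction so that the $m$-th case of each identity uses only the $(m-1)$-th cases together with $[\nabla_{\hat n},\,\cdot\,]$ commutator bounds and the Bianchi/Gau\ss--Codazzi identities applied once.
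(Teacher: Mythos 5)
Your proposal is correct and follows essentially the same route as the paper: both reduce the claim to the classical $m=0$ hypersurface identities (Gau\ss, Codazzi--Mainardi, the normal--normal/Riccati piece, and the trace relations defining $P$ and $J$), and then handle $m\geq 1$ by showing that each normal derivative raises the transverse order by exactly one while all commutator and projection corrections contribute strictly lower order. The paper phrases the $m\geq 1$ step abstractly via $\hat n_a\nabla_{\hat n}\eqSig \nabla_a-\nabla_a^\top$ together with the Bianchi identities rather than your Gau\ss-normal-coordinate bookkeeping, but that is a difference of presentation rather than substance (one small slip: the purely tangential Christoffel symbols of $h_s$ carry no $\partial_s$, though this does not affect your order counting).
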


\begin{proof}
The cases where $m = 0$ follow from the definition of the Schouten tensor, the fact that $\II$ has transverse order $1$, and standard hypersurface identities:
\begin{align} 
R_{abcd}^\top &\eqSig \bar{R}_{abcd} - \II_{ac} \II_{bd} + \II_{ad} \II_{bc}\,, \label{gauss} \\
R_{abc \hat n}^\top &\eqSig \bar{\nabla}_a \II_{bc} - \bar{\nabla}_b \II_{ac}\,,\label{cod}\\
Sc - Ric_{\hat n \hat n}  &\eqSig\bar{Sc} + \II^2 - (\operatorname{tr} \II)^2\,, \label{theorema} \\
W_{\hat n ab \hat n} + (d-3) P^\top_{ab} &\eqSig \IIo_{ab}^2 - \tfrac{1}{2(d-2)} \IIo^2 \bar{g}_{ab} + (d-3)\left(\bar{P}_{ab} + H \IIo_{ab} + \tfrac{1}{2} H^2 g_{ab} \right)\,. \label{fialkow}
\end{align}
We write $\eqSig$ to represent identities that hold along $\Sigma$. Equation~\nn{gauss} is the Gau\ss\ equation, Equation~\nn{cod} is the Codazzi--Mainardi equation, Equation~\nn{theorema} is Gau\ss' \textit{theorema egregium}, and Equation~\nn{fialkow} is the Fialkow--Gau\ss\ equation~\cite{Will1}.

To show these equivalences hold for $m \geq 1$, note that $\hat{n}_a \nabla_{\hat n} \eqSig \nabla_a - \nabla^\top_a$, where $\nabla^\top$ is an NHO with transverse coefficient and derivative orders $0$. Then the equivalences follow from rearranging derivatives (which leads to lower order terms) and application of the Bianchi identities. Finally, we can express Ricci and scalar curvatures in terms of the Schouten tensor and its trace.
\end{proof}

Using this lemma, we provide a set of distinguished generating tensors for hypersurface invariants.

\begin{theorem} \label{riem-invs-general}
Let $\Sigma \hookrightarrow (M^d,g)$ with $d$ even be a smoothly embedded hypersurface with $g$ generic and let $I[g]$ be an NHI for $\Sigma \hookrightarrow (M,g)$ with $g$-transverse order $k \in \mathbb{Z}_{\geq 0}$. Then, if $k < 2$, $I[g]$ can be expressed as a (partial) contraction polynomial in
$$\{\bar{g}, \bar{g}^{-1},\bar{\nabla}, \bar{R}, \hat{n}, \IIo, H\}\,,$$
and if $k \geq 2$, then $I[g]$ can be expressed as a (partial) contraction polynomial in
$$\{\bar{g}, \bar{g}^{-1},\bar{\nabla}, \bar{R}, \hat{n}, \IIo, \otop \colon \nabla_{\hat n} \colon P, \ldots, \otop \colon \nabla_{\hat n}^{k-2} \colon P, H, J|_{\Sigma},  \ldots, \colon \nabla^{k-2}_{\hat n} \colon J|_{\Sigma}\}\,.$$
\end{theorem}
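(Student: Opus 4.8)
The plan is to start from Proposition~\ref{easy-set}, which already expresses an arbitrary NHI $I[g]$ as the restriction to $\Sigma$ of a (partial) contraction polynomial in $\{g, g^{-1}, \hat n, \II, \bar\nabla\II, \ldots, \bar\nabla^\ell \II, R, \nabla R, \ldots, \nabla^m R\}$. The first move is to eliminate the ``bulk'' connection $\nabla$ appearing on the Riemann tensors in favour of data tangential to $\Sigma$ together with normal derivatives $\nabla_{\hat n}$. Writing each $\nabla_a = \hat n_a \nabla_{\hat n} + \nabla^\top_a$ (valid along $\Sigma$, as noted in the proof of Lemma~\ref{reduce-to}), and commuting the tangential pieces $\nabla^\top$ past everything, one rewrites every monomial so that all surviving bulk derivatives are iterated normal derivatives $\colon\nabla^j_{\hat n}\colon$ hitting $R$, and all tangential derivatives are absorbed into $\bar\nabla$ acting on the resulting hypersurface tensors. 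The commutators generate only curvature terms of strictly lower transverse order, so this process terminates.

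Next I would apply Lemma~\ref{reduce-to} to each factor of the form (some index contraction of $\colon\nabla^j_{\hat n}\colon R$). Every such factor is, modulo terms of transverse order $\le j+1$, either zero (the fully tangential and single-normal-index projections) or a multiple of $\otop\colon\nabla^j_{\hat n}\colon P$ and $\colon\nabla^j_{\hat n}\colon J$ (the double-normal projection), together with the $m=0$ Gau\ss/Codazzi/Fialkow relations that trade $R$, $\bar R$, $\II$, $H$ against one another. So, up to $\tsim{k+1}$ (or the appropriate equivalence), $I[g]$ is rewritten as a partial contraction polynomial in $\{\bar g, \bar g^{-1}, \bar\nabla, \bar R, \hat n, \IIo, H\}$ together with the tensors $\otop\colon\nabla^j_{\hat n}\colon P$ and $\colon\nabla^j_{\hat n}\colon J$ for $1 \le j$. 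The point of the transverse-order bookkeeping is: $\IIo$ and $H$ carry transverse order $1$, $\bar g$, $\bar g^{-1}$, $\hat n$, $\bar R$, $\bar\nabla$ carry transverse order $0$, and $\otop\colon\nabla^j_{\hat n}\colon P$ and $\colon\nabla^j_{\hat n}\colon J$ carry transverse order $j+1$; so a monomial involving $\otop\colon\nabla^j_{\hat n}\colon P$ with $j \ge k-1$ has transverse order $\ge k+1 > k$ and therefore must cancel among themselves (on a generic metric) or be of the wrong order. This confines the needed tensors to $j \le k-2$, giving exactly the advertised generating set, and in the case $k < 2$ no $P$- or $J$-tensors survive at all, leaving only $\{\bar g, \bar g^{-1}, \bar\nabla, \bar R, \hat n, \IIo, H\}$.

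The last ingredient is an induction on transverse order to upgrade the ``$\tsim{k+1}$'' statement to an exact formula: having written $I[g] \tsim{k+1} I'[g]$ with $I'$ in the desired generating set, the difference $I[g] - I'[g]$ is an NHI of $g$-transverse order at most $k-1$, hence (inductive hypothesis) already expressible in the generating set for order $k-1$, which is contained in the one for order $k$. The base case $k=0$ is $I[g]$ of transverse order $0$: by the $m=0$ part of Lemma~\ref{reduce-to} (Gau\ss\ etc.) it lies in $\{\bar g, \bar g^{-1}, \bar\nabla, \bar R, \hat n\}$ after discarding the $\II$-dependent terms, which have transverse order $1$ and so cannot appear. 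The hypothesis that $d$ is even is used to guarantee that the potentially obstructing $\colon\nabla^j_{\hat n}\colon J$ and $\otop\colon\nabla^j_{\hat n}\colon P$ quantities are genuinely independent and that no dimensional degeneracies (e.g.\ vanishing denominators like $d-3$, $d-5$ appearing in the conformal fundamental form formulae, or accidental identities in odd dimensions) collapse the counting.

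The main obstacle I anticipate is the transverse-order bookkeeping under the rewriting in the first two paragraphs: one must check carefully that commuting $\nabla^\top$ past $\nabla_{\hat n}$ and past the hypersurface tensors, and applying the Bianchi identities, really does produce only terms of strictly lower transverse order, so that the equivalence $\tsim{k+1}$ is preserved at each step and the induction closes. A secondary subtlety is that the coordinate-representation definition of $g$-transverse order means ``transverse order $\le k-1$'' is a statement about the \emph{existence} of a good coordinate representative, so passing from ``$I - I'$ has transverse order $\le k-1$'' to ``$I - I'$ is an NHI to which the inductive hypothesis applies'' requires knowing $I - I'$ is itself an NHI (it is, being a difference of NHIs) and that the $\tsim{k}$-class is well-defined independent of representatives — both of which are immediate from the definitions but worth stating.
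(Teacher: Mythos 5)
Your proposal follows essentially the same route as the paper: Proposition~\ref{easy-set}, the decomposition $\nabla = \nabla^\top + \hat n\,\nabla_{\hat n}$ modulo lower transverse order, Lemma~\ref{reduce-to}, and a transverse-order count confining the Schouten-derived generators to $j \le k-2$; your explicit induction on transverse order to absorb the lower-order remainder is a tightening of a step the paper leaves implicit. One small slip: $\otop \colon \nabla^j_{\hat n} \colon P$ and $\colon \nabla^j_{\hat n} \colon J$ carry transverse order $j+2$ (not $j+1$, since $R$ already has order $2$), which is in fact the count your inequality ``$j\ge k-1$ implies order $\ge k+1$'' tacitly uses, so your conclusion $j\le k-2$ stands.
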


\begin{proof}
Let $I[g]$ be an NHI with transverse order $k \in \mathbb{Z} \cap [0,d-2]$. Using Lemma~\ref{easy-set}, we can express $I[g]$ as the restriction to $\Sigma$ of a (partial) contraction polynomial in
$$\{g\,, g^{-1}\,,\hat{n}\,, \II, \bar{\nabla} \II, \ldots, \bar{\nabla}^\ell \II, R, \nabla R, \ldots, \nabla^m R\}\,.$$
Note that $\II$ has transverse order $1$, and all projections of derivatives of Riemann tensors involve at least one factor of $\II$, so if $k = 0$, by Weyl's classical invariant theory, $I[g]$ is expressable as a (partial) contraction polynomial in $\{\bar{g}, \bar{g}^{-1}, \bar{\nabla}, \bar{R}, \hat{n}\}$.

If $k=1$, then $I[g]$ must be expressable in terms of
$$\{\bar{g}, \bar{g}^{-1}, \bar{\nabla}, \bar{R}, \hat{n}, \II\}\,.$$
 This follows from the Gau\ss\ equation, the Codazzi--Mainardi equation, and Proposition~\ref{easy-set}. But the definition of $\IIo$ then implies that $I[g]$ can be expressed as a (partial) contraction polynomial in
$$\{\bar{g}, \bar{g}^{-1},\bar{\nabla}, \bar{R}, \hat{n}, \IIo, H\}\,.$$

Now consider the case where $k \geq 2$. Note that $\nabla = \nabla^\top + \hat{n} \nabla_{\hat n}$, so the following operator identity holds:
$$\nabla_{a_1} \cdots \nabla_{a_m} \eqSig \hat{n}_{a_1} \cdots \hat{n}_{a_m} \colon \nabla_{\hat n} \colon + \text{lower transverse order operators.}$$
So without loss of generality it is sufficient to consider only terms appearing in $I[g]$ of the form $\colon \nabla_{\hat n}^{k-2} \colon R|_{\Sigma}$. In particular, from Lemma~\ref{reduce-to}, it is clear that if $I[g]$ has $g$-transverse order $k$, then it follows that $I[g]$ is a (partial) contraction polynomial in
$$\{\bar{g}, \bar{g}^{-1},\bar{\nabla}, \bar{R}, \hat{n}, \IIo, \otop \colon \nabla_{\hat n} \colon P, \ldots, \otop \colon \nabla_{\hat n}^{m-2} \colon P, H, J|_{\Sigma},  \ldots, \colon \nabla^m_{\hat n} \colon J|_{\Sigma}\}\,.$$
This completes the proof.
\end{proof}

In general, non-genericity of $g$ in $\Sigma \hookrightarrow (M^d,\cc)$ can only reduce the structures that NHIs can depend on in the above theorem. That is, Theorem~\ref{riem-invs-general} also holds for non-generic $g$, although non-genericity may result in some linear dependence amongst elements of the generating set.

\bigskip

\color{black}

\section{Natural Conformal Hypersurface Invariants} \label{conf-hyp-sec}
We are now ready to examine the structure of conformal hypersurface invariants, using the tools developed above. We first provide a definition.
\begin{definition} \label{NCHI}
Let $\Sigma \hookrightarrow (M,\cc)$ be a conformal hypersurface embedding with defining function $s$ and, for any $g \in \cc$, let $I[g,s]$ be an NHI. Then, we say that $I[\cc,s]$ is a \textit{natural conformal hypersurface invariant} (NCHI) of weight $w$ when, for any $\Omega \in C^\infty_+ M$, we have that $I[\Omega^2 g,s] = \Omega^w I[g, s]$ for some $w \in \mathbb{R}$.
\end{definition}

As before, when $\cc$ and $s$ can be inferred from context, we will drop their dependencies. With this definition in mind, we will show that transverse order is a well-defined notion for an NCHI.
\begin{lemma} \label{to-hyper}
Let $\Sigma \hookrightarrow (M^d,\cc)$ be a conformal hypersurface embedding with defining function $s$ and let $I[\cc,s]$ be an NCHI of weight $w$. Further,  let $g,\tilde{g} \in \cc$. If $I[g,s]$ has $g$-transverse order $k$, then $I[\tilde{g},s]$ has $\tilde{g}$-transverse order $k$. 
\end{lemma}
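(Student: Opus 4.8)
The plan is to reduce the statement to a coordinate computation, exploiting the fact that $g$ and $\tilde g = \Omega^2 g$ differ by a globally smooth, nowhere-vanishing conformal factor. First I would fix a collar neighborhood $U = [0,\epsilon)\times\Sigma$ and local coordinates $(s,y^i)$ adapted to the defining function $s$ as in Section~\ref{riem-hyp-sec}, so that both $g$ and $\tilde g$ are represented by their coordinate components, and the key point is that $\tilde g_{ab} = \Omega^2 g_{ab}$ with $\Omega \in C^\infty_+M$. Since $\Omega$ and all of its partial derivatives — in particular all of its normal derivatives $\partial_s^j\Omega$ — are smooth and involve \emph{no} derivatives of $g$ at all, they have $g$-transverse order $0$. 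By the Leibniz rule, for any coordinate representation of $I[g,s]$ as a polynomial in $s$, the components of $g$, the components of $g^{-1}$, $1/\sqrt{g^{ss}}$, and their partial derivatives, substituting $\tilde g = \Omega^2 g$ produces a coordinate representation of $I[\tilde g,s]$ in which every normal derivative hitting a metric component is either a normal derivative of a factor of $\Omega$ (transverse order $0$) or a normal derivative $\partial_s^j g_{ab}$ carried over from the original representation. Hence the maximal number of normal derivatives landing on a metric component cannot increase under this substitution: the $\tilde g$-transverse order of $I$ is at most its $g$-transverse order, i.e. $\tilde g$-transverse order of $I \leq k$.

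For the reverse inequality I would use the symmetry of the setup. The weight relation $I[\Omega^2 g, s] = \Omega^w I[g,s]$, together with the fact that $1/\Omega \in C^\infty_+M$ as well, shows that the roles of $g$ and $\tilde g$ are interchangeable: writing $g = (1/\Omega)^2 \tilde g$, the identical argument gives that the $g$-transverse order of $I[g,s] = \Omega^{-w} I[\tilde g, s]$ is at most the $\tilde g$-transverse order of $I[\tilde g,s]$ (the overall smooth scalar factor $\Omega^{-w}$ again contributing only transverse order $0$ terms by Leibniz). Combining the two inequalities yields equality, $k$.

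The one subtlety — and the step I expect to require the most care — is that the $g$-transverse order is defined as a \emph{minimum} over all coordinate representations, so one must argue that the construction above really does control this infimum, not merely one particular representation. The clean way to handle this is: start from a coordinate representation of $I[g,s]$ that \emph{achieves} the minimum $k$ (such a representation exists since the invariant is nonnegative-integer-valued and bounded below), perform the substitution $\tilde g = \Omega^2 g$ to get a representation of $I[\tilde g,s]$ with maximal metric-derivative count $\leq k$, and conclude $\tilde g\text{-transverse order} \leq k$; then run the symmetric argument starting from a minimizing representation of $I[\tilde g,s]$. Implicit here is that substituting $\tilde g = \Omega^2 g$ into a valid coordinate representation of $I[g,\pdot]$ does yield a valid coordinate representation of $I[\tilde g,\pdot]$ in the same admissible class (polynomials in $s$, metric components, inverse metric components, $1/\sqrt{g^{ss}}$, and their partials) — this is immediate because $\Omega$, $\Omega^{-1}$ and their derivatives are smooth functions on $M$ and the class of such polynomial expressions is closed under multiplication by smooth functions and their derivatives. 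With that observed, the proof is complete.
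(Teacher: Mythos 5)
Your proposal is correct and follows essentially the same route as the paper's proof: fix a coordinate representation realizing the minimal normal-derivative count, substitute $g=\Omega^{-2}\tilde g$ (equivalently $\tilde g=\Omega^2 g$) together with the weight relation $I[\Omega^2 g,s]=\Omega^w I[g,s]$, and observe via the Leibniz rule that the factors of $\Omega$ contribute no transverse derivatives of the metric, so the count cannot increase; the reverse inequality then comes from running the same argument with $\Omega^{-1}$. Your packaging as two symmetric inequalities is a slightly cleaner organization that makes the paper's explicit non-cancellation discussion unnecessary, but the underlying mechanism is identical.
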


\begin{proof}
Fixing $\Sigma \hookrightarrow (M,\cc)$, let $I[\cc,s]$ be an NCHI of weight $w$ and let $I[g,s]$ be a representative. On a collar neighborhood of $\Sigma$, choose local coordinates $(s,y^i)$. We now consider a coordinate representation of $I[g,s]$ where the highest normal derivative term is $k$, which exists by the definition of $g$-transverse order. Then, in this coordinate representation,
$$
I[g,s] = \Big[\mathsf{O}[g,s]\left(\partial_s^k g_{ab}\right) + \mathsf{P}[g,s](g_{ab}) \Big] \,,
$$
where $\mathsf{O}[g,s]$ is a non-zero NHO with $g$-transverse differential order $0$ and $g$-transverse coefficient order at most $k$, and $\mathsf{P}[g,s]$ is an NHO with $g$-transverse differential order at most $k-1$ and $g$-transverse coefficient order at most $k-1$.  Also, $\partial_s g_{ab}$ refers to the matrix of normal derivatives of metric coordinate components. Note that no coefficient term in $\mathsf{O}[g,s]$ with $k$ normal derivatives can completely cancel the $\partial_s^k g_{ab}$ term on which $\mathsf{O}[g,s]$ acts, otherwise this coordinate representation would not achieve a highest normal derivative count of $k$.

By definition, because $g,\tilde{g} \in \cc$, there exists $\Omega \in C^\infty_+ M$ such that $\tilde{g} = \Omega^2 g$. So, we now compute:
\begin{align*}
I[\tilde{g},s] =& \Omega^w I[g, s] \\
=& \Omega^w \Big\{ \mathsf{O}[g,s] \left( \partial_{s}^k (g_{ab}) \right) + \mathsf{P}[g,s](g_{ab})\big\} \\
=& \Omega^w \Big\{ \mathsf{O}[g,s] \left( \partial_{s}^k (\Omega^{-2} \tilde{g}_{ab}) \right) + \mathsf{P}[g,s](\Omega^{-2} \tilde{g}_{ab})\big\} \\
=& \Omega^w \Big\{ \hat{\mathsf{O}}[\tilde{g},{s}] \left( \partial_{{s}}^k (\Omega^{-2} \tilde{g}_{ab}) \right) + \hat{\mathsf{P}}[\tilde{g},{s}](\Omega^{-2} \tilde{g}_{ab})\big\}\,.
\end{align*}
The second line follows from the definition of $g$-transverse order $k$. To obtain the fourth equality, consider the first term on the third line: the coordinate components of $g$ only appear in the coefficients of the operator, and hence can be replaced with $\Omega^{-2} \tilde{g}$, making each of these coefficients a functional of $\tilde{g}$---we denote this new operator by $\hat{O}[\tilde{g},{s}]$. Such terms are of the form $\partial^\ell (\Omega^{-2} \tilde{g})$, and hence can be written as the sum of $\Omega^{-2} \partial^{\ell} \tilde{g}$ and terms containing fewer derivatives of $\tilde{g}$. So, the $\tilde{g}$-transverse coefficient order of $\hat{\mathsf{O}}[\tilde{g},{s}]$ matches the $g$-transverse coefficient order of $\mathsf{O}[g,{s}]$. Similarly, the same holds to rewrite $\mathsf{P}[g,{s}]$ in terms of $\hat{\mathsf{P}}[\tilde{g},{s}]$. Note that this procedure cannot produce an operator with $\tilde{g}$-transverse derivative order larger than the $g$-transverse derivative order of the corresponding unhatted operator, and similarly for the $\tilde{g}$-transverse coefficient order.

Now observe that there are two possible sources of terms with $k$ normal derivatives acting on $\tilde{g}$ in $I[\tilde{g},{s}]$: namely, the single term arising from $\partial_{{s}}^k \Omega^{-2} \tilde{g}_{ab}$, and any possible coefficients in $\hat{\mathsf{O}}[\tilde{g},{s}]$ that also contain $k$ normal derivatives of components of $\tilde{g}$. However, the only way these terms maintain their number of normal derivatives upon converting from $g$ to $\tilde{g}$ is to pass the factors of $\Omega$ through the partial derivatives. But then the resulting terms are proportional to those found in $I[g,s]$, where the factor of proportionality is just a power of $\Omega$. But by the argument above, we know that cancellation of these terms does not occur in $I[g,s]$ and thus cannot occur in $I[\tilde{g},{s}]$. So, in this coordinate representation, there is at least one term with $k$ normal derivatives on $\tilde{g}$, and no terms with more than $k$.

Now suppose that there is a coordinate representation where $I[\tilde{g},{s}]$ has fewer than $k$ normal derivatives. Then by the reverse of the calculation and argument detailed above, this implies that there exists a corresponding coordinate representation in which $I[g,s]$ also have fewer than $k$ normal derivatives, a contradiction. Thus, $I[\tilde{g},{s}]$ has $\tilde{g}$-transverse order $k$.
\end{proof}
In general, for $\Sigma \hookrightarrow (M,g)$ with defining function $s$, the $g$-transverse order of any particular NHI is not invariant under conformal rescalings: this only occurs when said NHI is a representative of an NCHI. As an example, consider the mean curvature $H^g$ of $\Sigma$. A short calculation shows that for generic $g$, $H^g$ has $g$-transverse order $1$. However, there always exists a function $\Omega \in C^\infty_+ M$ such that $H^{\Omega^2 g} = 0$, which (clearly) has $\Omega^2 g$-transverse order $0$. On the other hand, Lemma~\ref{to-hyper} allows us to extend the notion of transverse order to NCHIs. For a conformal hypersurface embedding $\Sigma \hookrightarrow (M,\cc)$ with defining function $s$, we say that an NCHI has \textit{$\cc$-transverse order} $k$ when, for any $g \in \cc$, we have that $I[g,s]$ has $g$-transverse order $k$. 

\medskip

To prove Theorem~\ref{main-result}, we must now turn our attention to conformal fundamental forms. Let $\Sigma \hookrightarrow (M,g)$. According to~\cite{BGW1}, an $m^{\text{th}}$ \textit{conformal fundamental form} is any natural (in the sense described above) trace-free section $L$ of symmetric squares of the cotangent bundle of $\Sigma$ with transverse order $m-1$ that satisfies
$$L^{\Omega^2 g} = \Omega^{3-m} L^g\,,$$
for any $\Omega \in C^\infty_+ M$. Various constructions are provided for such tensors. While the constructions for $m^{\text{th}}$ fundamental forms differ slightly in~\cite{BGW1} and~\cite{Blitz1}, the core construction is the same. In particular, when $3 \leq m < \frac{d+3}{2}$ or $d$ is even and $3 \leq m \leq d-1$, we have that
$$\FF{m} \tsim{m-1} \alpha_m \otop \colon \nabla_{\hat n}^{m-2} \colon \IIo^{\rm e} \,,$$
where $\alpha_m$ is some non-zero coefficient, $\IIo^{\rm e} := \nabla_{(a} \nabla_{b)\circ} s + s \mathring{P}_{ab}$, and $s$ is any defining function for $\Sigma \hookrightarrow (M,g)$ that obeys
$$|ds|^2_g - \tfrac{2}{d}(s \Delta s + Js^2) = 1 + \mathcal{O}(s^{d})\,.$$
Note that this condition is conformally invariant. In particular, using Lemma~\ref{reduce-to}, we have that
$$\FF{m} \tsim{m-1} \beta_m \otop \colon \nabla_{\hat n}^{m-3} P\,,$$
where again $\beta_m$ is a non-zero coefficient.

While we do not have canonical constructions for a $d^{\text{th}}$ fundamental form or higher in general, by the definition given, one can directly construct examples of $\IVo$ and $\Vo$ when $d=4$. A canonical construction for $\IVo$ in the case where $d=4$ is given in~\cite[Section 3]{Blitz1}, where one finds that
$$\IVo \tsim{3} \otop \nabla_{\hat n} P\,.$$
Similarly, in~\cite[Section 1]{BGW1}, a construction for $\Vo$ is given in $d=4$, with
$$\Vo \tsim{4} \otop \Delta P \tsim{4} \otop \colon \nabla_{\hat n}^2 \colon P\,.$$
These are all examples of a corollary of Proposition~\ref{easy-set}.

\begin{corollary} \label{all-FFs}
Let $\Sigma \hookrightarrow (M,g)$ be a smoothly embedded hypersurface. For any integers $m \in \mathbb{Z}_{\geq 3}$, if there exists an $m^{\text{th}}$ conformal fundamental form $\FF{m}$, then
$$\FF{m} \tsim{m-1} \alpha \otop \colon \nabla_{\hat n}^{m-3} \colon P\,,$$
for some non-zero constant $\alpha$.
\end{corollary}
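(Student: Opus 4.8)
The plan is to determine the $\tsim{m-1}$-class of $\FF{m}$ by combining the Riemannian structure results of Section~\ref{riem-hyp-sec} with the conformal weight that a conformal fundamental form carries. The hypothesis gives: $\FF{m}$ is an NHI which is a trace-free symmetric rank-two tensor on $\Sigma$, it has transverse order exactly $m-1$, and, as an NCHI, it has weight $3-m$; note that $\otop\colon\nabla_{\hat n}^{m-3}\colon P$ is itself an NHI of this kind, so the content is that the top transverse-order data of \emph{any} conformal fundamental form is forced.

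The first substantive step is to produce a generating set for this leading data. By Proposition~\ref{easy-set}, $\FF{m}$ is a (partial) contraction polynomial in $\{g,g^{-1},\hat n,\II,\bar\nabla\II,\dots,R,\nabla R,\dots\}$; then, using the identity $\nabla=\nabla^\top+\hat n\,\nabla_{\hat n}$ together with Lemma~\ref{reduce-to} (cf.\ the proof of Theorem~\ref{riem-invs-general}), one rewrites $\FF{m}$, modulo $\tsim{m-1}$, as a (partial) contraction polynomial in $\{\bar g,\bar g^{-1},\hat n,\IIo,H,\bar R,\bar\nabla\}$ together with $\otop\colon\nabla_{\hat n}^p\colon P$ and $\colon\nabla_{\hat n}^p\colon J$ for $0\leq p\leq m-3$. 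A monomial in these generators has transverse order $m-1$ only if one of its factors does, and among the listed generators only $\otop\colon\nabla_{\hat n}^{m-3}\colon P$ (trace-free symmetric rank two, leading conformal weight $3-m$) and $\colon\nabla_{\hat n}^{m-3}\colon J$ (a scalar of leading conformal weight $1-m$) have transverse order $m-1$; a short weight check then shows each monomial contributing at transverse order $m-1$ contains exactly one of these two factors, appearing linearly, with a coefficient built from generators of transverse order at most $m-2$.

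Then I would eliminate everything but a constant multiple of $\otop\colon\nabla_{\hat n}^{m-3}\colon P$, using weight and conformal covariance. A weight count on the conformally covariant building blocks does most of the work: $\IIo$ (weight $+1$) is the only covariant generator of positive weight; $\hat n$ appears only through the weight-$(-1)$ vector $\hat n^a$, since its conormal covector restricts to zero on $\Sigma$; $\bar g$ contributes no trace-free part; and each contraction of two lower-index tensors costs one inverse metric, i.e.\ weight $-2$. Hence no weight-$(3-m)$ trace-free rank-two companion is available for $\colon\nabla_{\hat n}^{m-3}\colon J$, so that factor does not occur, and the only weight-$(3-m)$ trace-free rank-two monomial built around $\otop\colon\nabla_{\hat n}^{m-3}\colon P$ is a constant multiple of it, there being no nonconstant conformally covariant scalar of weight $0$ and transverse order at most $m-2$. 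Finally the non-covariant generators $\bar R$, $H$, $\bar\nabla$ cannot enter at transverse order $m-1$: their conformal-rescaling inhomogeneities involve derivatives of the conformal factor but not of the metric, so they do not lower the metric-transverse order, and a term containing them would break the $\Omega^{3-m}$ homogeneity of $\FF{m}$ at transverse order $m-1$; an induction on the number of such factors (bounded, as are tensor rank, polynomial degree, and transverse order) rules them out. This gives $\FF{m}\tsim{m-1}\alpha\,\otop\colon\nabla_{\hat n}^{m-3}\colon P$ for some constant $\alpha$.

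To finish, $\alpha\neq 0$: since $\FF{m}$ has transverse order exactly $m-1$, the value $\alpha=0$ would force $\FF{m}$ to have transverse order at most $m-2$, a contradiction. I expect the last elimination step to be the main obstacle --- ruling out the non-conformally-covariant generators $\bar R$, $H$, $\bar\nabla$ at the leading transverse order, and showing that no nonconstant weight-zero scalar coefficient arises --- since it requires carefully tracking the interplay between the (max-)subadditivity of transverse order and the inhomogeneous conformal transformation laws; by contrast the generating-set reduction and the non-vanishing of $\alpha$ are routine given Section~\ref{riem-hyp-sec}.
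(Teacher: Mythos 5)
Your proof is correct and follows essentially the same route as the paper's: both reduce $\FF{m}$ to the generating set of Proposition~\ref{easy-set} via Lemma~\ref{reduce-to}, then use a weight/rank/transverse-order count to show that the only admissible leading term is a multiple of $\otop \colon \nabla_{\hat n}^{m-3} \colon P$ (after discarding pure-trace pieces), nonzero because the transverse order is exactly $m-1$. The paper runs this count as an explicit Diophantine system on monomial exponents with homogeneous leading weights assigned to $\II$, $R$, and $\nabla_{\hat n}$, which implicitly absorbs the elimination of inhomogeneously transforming generators that you handle separately via the conformal-covariance argument.
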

\begin{proof}
Using the set from Proposition~\ref{easy-set}, we attempt to construct $\FF{m}$ as per its definition: a rank-2 tensor with conformal weight $3-m$ and transverse order $m-1$. Candidate terms are of the form
$$(g^{-1})^{a_1} \bar{\nabla}^{a_2} (\II)^{a_3} \hat{n}^{a_4} (\colon \nabla_{\hat n}^{a_5} \colon) R^{a_6}\,,$$
where each $a_i \geq 0$. In particular, to satisfy the transverse order condition, we require that $a_5 \geq m-3$ and $a_6 \geq 1$. To satisfy the weight condition, we must have that
$$-2 a_1 + a_3 + a_4 - a_5 + 2 a_6 = 3-m\,,$$
and to ensure this candidate has the correct tensor structure, we must have that
$$-2a_1 + a_2 + 2 a_3 + a_4 + 4 a_6 = 2\,.$$
Because each $a_i$ is an integer and $\hat{n}$ can contract into $R$ at most twice, we find that there are two unique solutions to this system of equations:
$$\colon \nabla_{\hat n}^{m-3} \colon Ric_{ab} \qquad \text{and} \qquad \hat{n}^c \hat{n}^d \colon \nabla_{\hat n}^{m-3} \colon R_{cabd}\,.$$
However, from Lemma~\ref{reduce-to}, these both reduce to the desired tensor structure as in the corollary, modulo trace terms. 
Because conformal fundamental forms are hypersurface trace-free, the corollary follows.
\end{proof}

Establishing this fact is essential to proving Theorem~\ref{main-result}, because it allows us to apply Theorem~\ref{riem-invs-general}.

%

\begin{proof}[Proof of Theorem~\ref{main-result}]
From Lemma~\ref{to-hyper}, because $I[\cc]$ has $\cc$-transverse order $k$, for every $g \in \cc$ we have that $I[g]$ has $g$-transverse order $k$. Now, suppose that conformal fundamental forms $\{\IIo, \ldots, \FF{k+1}\}$. We now look to show that for every $g \in \cc$, we have that $I[g]$ can be expressed as a (partial) contraction polynomial in
 $$\{\bar{g}, \bar{g}^{-1},\bar{\nabla}, \bar{R}, \hat{n}, \IIo, \ldots, \FF{k+1}\}\,.$$

We prove the theorem in the stronger case where we assume $\cc$ is generic. Let $\Sigma \hookrightarrow (M^d,\cc)$ with $d$ even be a smoothly embedded conformal hypersurface and let $I[\cc]$ be an NCHI of this embedding. Clearly, if $I[\cc]$ has $\cc$-transverse order $0$, then in any metric representative,  it can be formed from elements of the set $\{\bar{g}, \bar{g}^{-1}, \bar{\nabla}, \bar{R}\}$ and the unit conormal $\hat{n}$.

Now consider the case where $k \geq 2$. We wish to show that for every $g \in \cc$, $I[g]$ can be expressed as a (partial) contraction polynomial in elements of
$$\{\bar{g}, \bar{g}^{-1},\bar{\nabla}, \bar{R}, \hat{n}, \IIo, \ldots, \FF{k+1}\}\,.$$
From Theorem~\ref{riem-invs-general}, we see that $I[g]$ must be expressable as a (partial) contraction polynomial in
$$\{\bar{g}, \bar{g}^{-1},\bar{\nabla}, \bar{R}, \hat{n}, \IIo, \otop \colon \nabla_{\hat n} \colon P, \ldots, \otop \colon \nabla_{\hat n}^{k-2} \colon P, H, J|_{\Sigma},  \ldots, \colon \nabla^{k-2}_{\hat n} \colon J|_{\Sigma}\}\,.$$
From Corollary~\ref{all-FFs}, because we have assumed that $\{\IIo, \ldots, \FF{k+1}\}$ exists, we can replace instances of normal derivatives of the Schouten tensor with their corresponding conformal fundamental forms, leaving us with the generating set
$$\{\bar{g}, \bar{g}^{-1},\bar{\nabla}, \bar{R}, \hat{n}, \IIo, \ldots, \FF{k+1}, H, J|_{\Sigma},  \ldots, \colon \nabla^{k-2}_{\hat n} \colon J|_{\Sigma}\}\,.$$
It remains to eliminate $\{H, J|_{\Sigma}, \ldots, \colon \nabla_{\hat n}^{k-2} \colon J|_{\Sigma}\}$ from the generating set; we do so by contradiction.

Suppose that for some representative $g \in \cc$, in the polynomial expression for $I[g]$ there exists at least one summand containing at least one factor of $\colon \nabla_{\hat n}^m \colon J|_{\Sigma}$. Then, without loss of generality, there exists a maximal integer $m$ such that $\colon \nabla_{\hat n}^{j} \colon J|_{\Sigma}$ is not contained in any summand of $I[g]$ for every $j > m$. Furthermore, there exists an integer $\ell \geq 1$ such that no summand contains more than $\ell$ factors of $\colon \nabla_{\hat n}^m \colon J|_{\Sigma}$.

Now because $I[g]$ is a representative of an NCHI, it must be conformally invariant. However, observe that$$
\colon (\nabla^{\Omega^2 g})_{\hat{n}}^{m} \colon J^{\Omega^2 g}|_{\Sigma}= \colon (\nabla^{\Omega^2 g})_{\hat{n}}^{m} \colon [\Omega^{-2} (J^g - \nabla^g \cdot \Upsilon + (1 - \tfrac{d}{2}) \Upsilon^2)]|_{\Sigma} \,,$$
where $\Upsilon = d \log \Omega$. Notably, expanding the above expression in terms of $\nabla^g$, the above expression has terms containing $-\Omega^{-m-2} \colon (\nabla^g)_{\hat n}^{m} \colon \nabla^g \cdot \Upsilon|_{\Sigma}$. Thus, in terms of invariants of $\Sigma \hookrightarrow (M,g)$, we have that there exists at least one summand in the polynomial expression for $I[\Omega^2 g]$ containing exactly $\ell$ factors of $- \Omega^{-m} \colon (\nabla^g)_{\hat n}^{m-2} \colon \nabla^g \cdot \Upsilon|_{\Sigma}$. But no lower order terms can cancel these conformal variations, so by the conformal invariance of $I[\cc]$, the coefficients of such summands must cancel internally for every $\Omega$, and hence must vanish for $\Omega = 1$, \textit{i.e.} for the initial choice of $g \in \cc$. This is a contradiction, as we assumed that $\colon \nabla_{\hat n}^m \colon J|_{\Sigma}$ appears at least once in $I[g]$.

The case where $m=1$ is identical to the above argument, except we have that
$$H^{\Omega^2 g} = \Omega^{-1} (H^g + \hat{n} \cdot \Upsilon)\,,$$
leading to the same contradiction. This completes the proof.
\end{proof}

Known results regarding existence of certain conformal fundamental forms lead to specializations of Theorem~\ref{main-result} that are themselves interesting. In particular, we have the following corollary.
\begin{corollary} \label{specialization}
Let $\Sigma \hookrightarrow (M^d,\cc)$ be a smooth conformal hypersurface embedding and let $I[\cc]$ be an NCHI of this embedding with $\cc$-transverse order $k \geq 0$. Then, $I[\cc]$ can be expressed as a (partial) contraction polynomial in
$$\{\bar{g}, \bar{g}^{-1}, \hat{n}, \bar{\nabla}, \bar{R}, \IIo, \dots, \FF{k+1}\}\,,$$
for $k \leq \frac{d-1}{2}$ and $d \geq 5$ odd, for $k \leq d-2$ and $d \geq 6$ even, and for $k \leq 4$ for $d =4$.
\end{corollary}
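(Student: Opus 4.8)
The plan is to deduce Corollary~\ref{specialization} directly from Theorem~\ref{main-result} by supplying, in each of the three stated dimension/order regimes, the hypothesis of Theorem~\ref{main-result}---namely the existence of a full family of conformal fundamental forms $\{\IIo, \ldots, \FF{k+1}\}$. Since Theorem~\ref{main-result} already does all the work of expressing $I[\cc]$ in terms of $\{\bar g, \bar g^{-1}, \hat n, \bar\nabla, \bar R, \IIo, \ldots, \FF{k+1}\}$ once such a family is known to exist, the corollary reduces entirely to an existence bookkeeping exercise.

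First I would recall the existence statements already quoted in Section~\ref{conf-hyp-sec}: the core construction gives an $m^{\text{th}}$ conformal fundamental form $\FF{m}$ whenever $3 \leq m < \tfrac{d+3}{2}$, or whenever $d$ is even and $3 \leq m \leq d-1$; moreover $\IIo$ always exists and is the $2^{\text{nd}}$ fundamental form, and in $d=4$ explicit constructions of $\IVo$ and $\Vo$ are available from~\cite{Blitz1,BGW1}. Then I would simply match the range of $m$ needed---$m$ runs from $2$ up to $k+1$---against these existence windows. For $d \geq 5$ odd, the window $3 \leq m < \tfrac{d+3}{2}$ covers all $m \leq \tfrac{d+1}{2}$, i.e. all $m \leq k+1$ provided $k+1 \leq \tfrac{d+1}{2}$, that is $k \leq \tfrac{d-1}{2}$. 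For $d \geq 6$ even, the window $3 \leq m \leq d-1$ covers all $m \leq k+1$ provided $k+1 \leq d-1$, i.e. $k \leq d-2$. For $d = 4$, the generic window only supplies $\IIo$ (and $\IIIo$, as the $m < \tfrac{d+3}{2} = \tfrac{7}{2}$ bound gives $m=3$), and the explicit $d=4$ constructions of $\IVo$ and $\Vo$ then extend the available family up to $\FF{5}$, i.e. up to $k+1 = 5$, giving $k \leq 4$. In each case, once $\{\IIo, \ldots, \FF{k+1}\}$ is in hand, Theorem~\ref{main-result} applies verbatim.

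There is really no single hard step here; the ``work'' is purely in correctly reading off the existence ranges and being careful with off-by-one issues in the relation between $k$ (the $\cc$-transverse order) and the index $k+1$ of the top fundamental form required. The one point that needs a sentence of care is $d=4$: one must note that the generic construction gives $\FF{m}$ only for $m=3$ (since $\tfrac{d+3}{2}=\tfrac72$ forces $m \leq 3$), and that the $d=4$ values $\FF4$ and $\FF5$ come from the separate explicit constructions cited, which is why the bound there is the flat $k \leq 4$ rather than something growing with $d$. I would also remark in passing that, as noted after Theorem~\ref{riem-invs-general}, non-genericity of $\cc$ only shrinks the generating set, so the corollary holds without a genericity hypothesis even though Theorem~\ref{main-result} was proved in the generic case first.

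In short, the proof is: cite the existence windows for conformal fundamental forms from~\cite{BGW1,Blitz1}, observe that in each of the three regimes these windows contain every index $m \in \{2, \ldots, k+1\}$, and invoke Theorem~\ref{main-result}. The main obstacle, such as it is, is simply stating the three inequalities cleanly and handling the $d=4$ exceptional case with the correct provenance for $\IVo$ and $\Vo$.
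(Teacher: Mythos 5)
Your proposal is correct and is essentially identical to the paper's own proof: both simply match the existence windows for $m^{\text{th}}$ conformal fundamental forms from~\cite{BGW1} ($m \leq \tfrac{d+1}{2}$ for $d\geq 5$ odd, plus $\IVo,\Vo$ in $d=4$) and~\cite{Blitz1} ($m \leq d-1$ for $d \geq 6$ even) against the required range $m \leq k+1$, and then invoke Theorem~\ref{main-result}. The off-by-one bookkeeping and the $d=4$ special case are handled exactly as in the paper.
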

\begin{proof}
In the case where $d \geq 5$ odd~\cite{BGW1} provides a construction for $m^{\text{th}}$ conformal fundamental forms for every $m  \leq \frac{d+1}{2}$. This reference also provides a construction for $\{\IIo, \IIIo, \IVo, \Vo\}$ in the case where $d =4$. In the case where $d \geq 6$ even~\cite{Blitz1} provides a construction for $m^{\text{th}}$ conformal fundamental forms for every $m \leq d-1$. The corollary then follows directly from Theorem~\ref{main-result}.
\end{proof}

\begin{remark}
Of particular interest in the $d=4$ case is that $\Vo \tsim{4} \otop B$, where $B$ is the Bach tensor.
\end{remark}

\section{Conclusion}
By providing a complete characterization of natural conformal hypersurface invariants up to a certain transverse derivative order, Theorem~\ref{main-result} and Corollary~\ref{specialization} give a useful generating set for conformally-invariant constraints on the conformal infinities of asymptotically de Sitter or anti-de Sitter spacetimes. An example of this has already been provided in the asymptotically de Sitter case~\cite{GoKop}, where in $d=4$, the conformal fundamental forms appeared as constraints on the asymptotic stress-energy tensor of the spacetime. We have shown that such relationships are generic and generalize both to arbitrary dimension and also to asymptotically anti-de Sitter spacetimes, thus proving that the conformal fundamental forms are indeed essential to understanding the relationship between the asymptotic features of a spacetime and the geometry of the corresponding conformal infinity.

While the work presented in this article only concerns spacelike and timelike hypersurfaces, the methods derived herein can likely be generalized to classify conformal invariants of null hypersurfaces as well. There is already significant interest in the curvature invariants that detect null boundaries such as black hole event horizons~\cite{FaraoniNielsen,McNuttPage,McNutt}, so a complete classification of conformal null hypersurface invariants would provide clarity in this endeavor.

\section*{Acknowledgements}
The author would like to thank A. Gover, D. McNutt, J. \v{S}ilhan, and A. Waldron for helpful comments and discussions. The author further gratefully acknowledges the support by the Czech Science Foundation (GACR) via grant GA22-00091S.

\bibliographystyle{siam}
\bibliography{bib}

\end{document}